\newtheorem{theorem}{Theorem}[subsection]
\newtheorem{proposition}{Proposition}[subsection]
\newtheorem{remark}{Remark}[subsection]
\newtheorem{assumption}{Assumption}[subsection]
\numberwithin{equation}{section}
\title{A Natural Stochastic SIS Model, Analysis of Moments and Comparison of Different Perturbation Techniques}
\author{Berk Tan Perçin\thanks{CNR-IMATI "E. Magenes" - Via A. Corti, 12 - 20133 Milano, Italy. \textbf{e-mail: }berktanpercin@cnr.it}}
\date{\today}
\begin{document}
	\maketitle
	\begin{abstract}
		In this study, a new, natural way of constructing a stochastic Susceptible - Infected - Susceptible (SIS) model is provided to the readers. This new approach is natural in the sense that, the disease transmission rate, $\beta$ will be substituted with a generic, almost surely non-negative one dimensional diffusion. $\beta\geq0$ is an essential property in the deterministic model but generally overlooked in stochastic counterparts, see \cite{graySISpaper, ourSISpaper}. Then, under different conditions on the parameters, the infected population's dynamics are identified, ie. boundedness, extinction and persistence of the infection. It turns out, the new stochastic model agrees with its deterministic version, where the basic reproduction number $R^D_0$ determines the limiting dynamics: Extinction when $R^D_0<1$ and persistence when $R^D_0>1$. 
		Then, a novel analytic technique will be provided to approximate the expectation of any well-behaved function of the infected population, including its moments, by increasing power of correction terms. Such a concept is very useful since the on average dynamics of any stochastic SIS model are not tractable due to its non-linearity. Lastly using the first order correction terms, two different perturbations having the same expectation, \eqref{gray perturbation} performed in \cite{graySISpaper} and the Cox–Ingersoll–Ross (CIR) perturbation proposed in this study, will be compared in terms of their expected effect on the infected population dynamics. This comparison methodology is very useful to provide insight to modelers about the effect of different small perturbations on the overall dynamics of the new model.
	\end{abstract}
	
	Key words and phrases: Stochastic epidemic models, Susceptible-Infected-Susceptible (SIS) model, stochastic perturbation, CIR model, Feynman-Kac Formula, Perturbation Theory, perturbation comparisons.
	
	\section{Introduction}
	While the biological and medical sciences focus primarily on the mechanisms of person-to-person infection, epidemiological modeling examines how these individual-level dynamics influence the broader, complex interactions within populations. Being able to predict such large scale dynamics would provide insight on the necessary measures to protect the society, as we all experienced in Covid-19 pandemic. In addition to these properties, same \emph{infection} models can be used to model how information spreads in a population, making these models very important for evaluating and predicting public opinion as well \cite{info_spread_pap, info_spread_pap2, info_spread_pap3}.
	
	The most popular approaches are either using ordinary differential equations (ODEs) to come up with a system of equations to describe the time evolution of the infection in the population see \cite{ SIRS_pert_pap, 3basicEpidemiologicModels, influenzaA_pap}, or if one wants to assign weights to certain connections rather than using an \emph{on average} infection rates, then using Graph Theory, see \cite{graph_th_pap, paperGraphTheory, graph_th_pap2}.
	
	\subsection{The Deterministic SIS Model}
	This paper's focus will be mostly on the so called "Susceptible Infected Susceptible" (SIS) model, analyzed via a system of PDEs. The SIS model groups the population into two categories \emph{susceptible} and \emph{infected} individuals, where upon contact of a susceptible individual with an infected, the susceptible individual is transferred into the infected category with the disease transmission rate $\beta\in\mathbb{R}^+$. While, the infected individuals get rid of the infection with the rate $\gamma\in\mathbb{R}^+$ to become again susceptible. That means in this model there is no immunity considered. Such models are more useful for various types of infections where individuals seem to experience the infection repeatedly. One such example is the gonorrhea infection where the recovered individuals can get re-infected, implying no apparent immunity observed for this infection type \cite{gonorrhoeaePaper}. If instead, immunity after infection is desired, one can use the so called "Susceptible Infected Recovered" (SIR) model, see \cite{Alberto_Matteo_SIR_paper} for both deterministic and stochastic overview.
	
	The deterministic SIS model is defined as:
	
	\begin{align}\label{deterministic SIS model with S}
		\begin{split}
			\frac{dS^D_t}{dt}=&\gamma I^D_t - S^D_tI^D_\beta,\\
			\frac{dI^D_t}{dt}=&S^D_tI^D_t\beta - \gamma I^D_t,
		\end{split}
	\end{align}
	
	where $S^D_t$ and $I^D_t$ represent the susceptible and infected population at time $t$ respectively. Moreover the initial consitions are given as: $S^D_0=s\geq0$ and $I^D_0=x\geq0$. If one assumes the the total size of the population $N_t$ at time $0$ is known and normalized to $N_0=s+x=1$. Then from the system \eqref{deterministic SIS model with S} it can be seen that $N_t=1$ for $t\geq0$. This means one can express the system \eqref{deterministic SIS model with S} as a single ordinary differential equation (ODE):
	
	\begin{equation}\label{deterministic SIS model}
		\frac{dI^D_t}{dt}=I^D_t(1-I^D_t)\beta - \gamma I^D_t,\quad I_0=x\in(0,1).
	\end{equation}
	
	where $S^D_t:=1-I^D_t$ for $t\geq0$. The equation \eqref{deterministic SIS model} can be solved easily by separation of variables and partial fraction decomposition:
	
	\begin{equation}\label{deterministic infected pop soln}
		I^D_t=\frac{e^{t(\beta-\gamma)}x(\beta-\gamma)}{\beta-\gamma + (e^{t(\beta-\gamma)}-1)x\beta}
	\end{equation}
	
	It turns out, one can find a ratio that totally governs the dynamics of the infection. That ratio is called as the \emph{deterministic reproduction number }$R^D_0:=\frac{\beta}{\gamma}$:
	
	\begin{equation*}
		\lim_{t\to\infty}I_t^D=\begin{cases}
			0,&R^D_0\leq1,\\
			\frac{\beta-\gamma}{\beta},&R^D_0>1.
		\end{cases}
	\end{equation*}
	
	Because this number is crucial in determining if the infected population will \emph{extinct} or will continue to \emph{persist}, it is the main interest of modelers.
	
	\subsection{Stochastic SIS Models in Literature}
	
	A more interesting version of the deterministic SIS model \eqref{deterministic SIS model} is when one or more of the rates perturbed to be a stochastic process instead of being a fixed number. Two notable examples of such papers are presented in Gray et. al.  \cite{graySISpaper} and Lanconelli and Perçin \cite{ourSISpaper}. 
	
	\subsubsection{The perturbation in Gray et. al. \cite{graySISpaper}}
	In Gray et. al. \cite{graySISpaper}, the diease transmission rate $\beta$ perturbed to satisfy:
	
	 \begin{equation}\label{gray perturbation}
	 	\beta dt\to\beta dt + \sigma dB_t.
	 \end{equation}
	 
	 For $\beta, \sigma\geq0$. The new perturbation converts the model \eqref{deterministic SIS model} to the SDE \eqref{gray stochastic SIS Model dI_t}:
	 
	 \begin{equation}\label{gray stochastic SIS Model dI_t}
	 	dI_t = \Big(I_t(1-I_t)\beta - \gamma I_t\Big)dt  + \sigma I_t(1-I_t)dB_t,\quad I_0=x.
	 \end{equation}
	 
	 so the disease transmission rate is chosen to be a normal random variable that spreads around the value $\beta t$ with variance $\sigma^2 t$. This source of randomness provides a more realistic interaction rate in the model since the model doesn't assume a fixed number of interactions all the time. 
	 
	 In the paper \cite{graySISpaper}, the authors analyzed the model and reported the limiting behavior of this model that depends on the stochastic reproduction number $R^S_0=R^D_0 - \frac{\sigma^2}{2\gamma}$. and the amplitude of the perturbation:
	 
	 \begin{itemize}
	 	\item If $R^S_0 < 1$ and $\sigma^2<\beta$ or if $\sigma^2>\max\left\{ \beta, \frac{\beta}{\gamma} \right\}$, then the infection will extinct, that is:
	 	$$\lim_{t\to\infty}I_t=0.$$
	 	\item If $R^S_0 > 1$, then the disease will be persistent, that is:
	 	$$\liminf_{t\to\infty}I_t\leq\xi\leq\limsup_{t\to\infty}I_t.,$$
	 	where $\xi := \frac{1}{\sigma^2}\left( \sqrt{\beta^2-2\sigma^2\gamma} - \beta+\sigma^2\right)$. This means the infected population will oscillate above and below the number $\xi$.
	 \end{itemize}
	 
	Then in the paper, these results are supported with numerical simulations.
	 
	 \subsubsection{The perturbation in Lanconelli and Perçin \cite{ourSISpaper}}
	
	In the paper of Lanconelli and Perçin, unlike of what has been done in Gray et. al, a more formal introduction of perturbation was performed. The problem with the perturbation \eqref{gray perturbation} is that one has to accept the heuristic quantities like the differential of Brownian motion. Instead the paper \cite{ourSISpaper} finds a way to represent the solution of the deterministic infected population \eqref{deterministic infected pop soln} as:
	
	\begin{equation}\label{integral expression of I_t}
		I^D_t = \frac{xe^{\int_{0}^t\beta ds-\gamma t}}{1+x\left( e^{\int_{0}^{t}\beta ds-\gamma t}-1+\int_{0}^{t}e^{\int_{0}^{s}\beta dr - \gamma s}\gamma ds \right)}.
	\end{equation}
	
	The advantage of expression \eqref{integral expression of I_t} is that instead of $\beta$ it depends on the integral of $\beta$ explicitly. This makes the same perturbation well defined since when the substitution $\beta dt \to \beta dt + \sigma dB_t$ is performed on the expression \eqref{integral expression of I_t}, one can just modify the integrals:
	
	$$\int_{0}^{t}\beta ds \to \int_{0}^t\beta ds + \sigma B_t.$$
	
	If this formalism is followed, the paper \cite{ourSISpaper} shows that, by applying It\^o formula to \eqref{integral expression of I_t} the perturbed infected population actually solves the Stratonovich version of the SDE \eqref{gray stochastic SIS Model dI_t}, namely $\sigma I_t(1-I_t)\circ dB_t$. This finding was also cross checked by applying the same perturbation of \eqref{gray perturbation} via the polygonal approximation of the Brownian motion, similar to what has been done in \cite{albert_bernardi}. This way one can formally manipulate $\beta\to\beta + \frac{dB^\pi}{dt}$ where $\pi$ is the mesh of the partition. Letting $\pi\to0$ and applying the Wong-Zakai theorem, the paper shows the Stratonovich SDE is obtained with this method will match the SDE solved by \eqref{integral expression of I_t}.
	
	Surprisingly the dynamics of this new stochastic SIS model is the same as its deterministic version unlike of the one proposed by Gray et. al. Namely, when $R^D\leq1$ the infections extincts and when $R^D>1$ the infection persists.
	
	These results are generalized to an Ornstein-Uhlenbeck (O-U) perturbation: 
	$$\int_{0}^t\beta ds\to\beta t + Y_t,$$
	where $Y_t$ solves the SDE $dY_t=-\alpha Y_t + \sigma dB_t$ with initial condition $Y_0=0$ and same conditions for extinction and persistence are reported. This perturbation is more realistic compared to the perturbation considered in Gray et. al. \cite{graySISpaper} because now one can also adjust the level of variability of the perturbed disease transmission coefficient since O-U process having bounded variance $\frac{\sigma^2}{2\alpha}$ unlike of the variance of Brownian motion that increases linearly with time.
	
	Finally, instead of the O-U perturbation considered, a generic SDE in the form:
	\begin{equation}\label{generic perturbation Z_t}
		dZ_t = b(t,Z_t)dt + \sigma dB_t,\quad Z_0=0.
	\end{equation}
	The function $b:[0,T]\times\mathbb{R}$ is considered to be globally Lipschitz continuous in $z$ and uniformly in $t$. In the paper, authors proved that:
	\begin{itemize}
		\item In order to have extinction of the infection, in addition to having the basic reproduction number $R^D_0\leq1$, one should also have $$\limsup_{t\to\infty}\frac{Z_t}{t}\leq0\quad a.s.$$ 
		\item Similarly, in order to have persistence of the infection, in addition to having the basic reproduction number $R^D_0>1$, one should also have $$0\leq\liminf_{t\to\infty}\frac{Z_t}{t}\leq\limsup_{t\to\infty}\frac{Z_t}{t}<\infty\quad a.s.$$
	\end{itemize}
	
	Which also generalizes the conditions on the O-U or Brownian perturbation. Because for both of these perturbations both conditions are satisfied and $\lim_{t\to\infty}\frac{Y_t}{t}=\lim_{t\to\infty}\frac{B_t}{t}=0$. So for these perturbations considered, the limiting behavior of the stochastic infected population model is governed from the deterministic basic reproduction number $R^D_0$.
	
	\subsection{Approach and organization of this paper}
	Although many of the perturbations done in literature generate useful stochastic models, including the papers \cite{graySISpaper, ourSISpaper}, they have a major drawback. The deterministic SIS model \eqref{deterministic SIS model with S}, by construction needs to have a non-negative disease transmission rate $\beta$ to properly reflect the dynamics of the infection. When $\beta<0$, as seen from the system \eqref{deterministic SIS model with S}, the infected population size decreases by each contact with the susceptible individuals. This dynamics doesn't make sense and the effect of this unrealistic choice is apparent in the deterministic dynamics. However unfortunately it is hidden in the variability introduced to the model in its stochastic versions. Because the perturbations considered in Gray et. al. \cite{graySISpaper} and Lanconelli and Perçin \cite{ourSISpaper} substitute the $\beta$ with their diffusions of interest, at any time $t$, there is a non-zero probability for the perturbed coefficient to be negative. Such a property is not desirable from a modeling point of view and generate additional complexities in parameter estimation. 
	
	This is why in this paper, a new generic perturbation will be applied on the deterministic SIS model's \eqref{deterministic SIS model with S} disease transmission coefficient $\beta$, which was not covered in aforementioned papers and will perturb the $\beta$ to be non-negative stochastic process. We call such perturbations as the \emph{natural} perturbations, preserving the nature of the parameter $\beta$. 
	
	In section \ref{section SIS pertubration} the new generic natural perturbation will be introduced in detail and the properties of this natural stochastic SIS model will be proved. These properties include the boundedness of the solution, identifying the conditions which govern the limiting behavior of the new model, extinction or persistence. 
	
	In section \ref{section diffusion examples}, two one dimensional diffusions are reported to the readers that provide the desired properties. Later the findings of section \ref{section SIS pertubration} are supported via computer simulations by using one of the diffusion processes. 
	
	In section \ref{section on average analysis}, a new method to analytically track the expectation of any function of the stochastic infected population will be provided. The technique relies on the associated Feynman-Kac PDE and Perturbation Theory. The findings of this section will be further supported by computer simulations. 
	
	Lastly in section \ref{section perturbation comparisons}, the technique introduced in section \ref{section on average analysis} will be utilized to show, how using two different perturbations, say $P^{(1)}_t$ and $P^{(2)}_t$, each having the same expectation, $\mathbb{E}[P^{(1)}_t]=\mathbb{E}[P^{(2)}_t],\:\forall t\geq0$ might affect, the expected solution of the stochastic infected population, $\mathbb{E}[\varphi(I_t)|I_0=x, P^{(i)}=y]$ for $i\in\{1,2\}$. In other words, this technique will aid modelers and guide them to choose the most appropriate perturbations for their work. It is because the explicit effect of perturbations on the expected dynamics are hidden in the complexity of the model, but this technique provides an iterative way to reveal them.
	
	\section{Natural SIS Perturbation Approach}\label{section SIS pertubration}
	In this framework our aim is to perturb the disease transmission parameter $\beta\in\mathbb{R}^+\cup\{0\}$ in the ODE \eqref{deterministic SIS model}
	to be a stochastic process $\{Y_t\}_{t\geq0}\geq0$, which satisfies the generic SDE:
	\begin{equation}\label{generic Y_t perturbation}
		dY_t = a(Y_t)dt + b(Y_t)dB_t,\quad Y_0=y\in(0,1).
	\end{equation}
	
	to obtain 
	\begin{equation}\label{perturbed SIS model}
		\frac{dI_t}{dt} = I_t(1-I_t)Y_t-\gamma I_t,\quad I_0=x\in(0,1).
	\end{equation}
	Such a nested way to introduce stochasticity to a coefficient is called stochastic volatility models and very common in finance. One example is the Heston model (see \cite{heston_paper}) where the diffusion coefficient of the price is chosen to satisfy another diffusion process, similar to $Y_t$ we introduced in \eqref{generic Y_t perturbation}. Below we explicitly list the properties that $Y_t$ has to satisfy.
	\begin{assumption}\label{assumptions}
		Throughout this analysis it is assumed that the new perturbation $Y_t$ satisfies the generic SDE \eqref{generic Y_t perturbation} with conditions:
		\begin{itemize}
			\item $a(y),\: b(y)\geq 0$ for $y\in(0,1)$.
			\item $a(0)\geq0, b(0)=0$.
			\item $Y_t$ is ergodic.
		\end{itemize} 		
	\end{assumption}
	
	The reason of having first 2 conditions is to enforce the process to be non-negative. Such a perturbation is more natural for the SIS model because the model itself is constructed on the idea of having positive disease transmission parameter. The last condition however, is required to prove the limiting dynamics as will be seen later in this section.	Now we can move on with proving the conditions boundedness, extinction and persistence of this new model
	
	\subsection{Boundedness Of The Solution}
	\begin{proposition}
		As long as the initial condition, $x\in(0,1)$, the solution of the perturbed model \eqref{perturbed SIS model} $\mathbb{P}(I_t\in[0,1))=1$, $\forall t>0$.
	\end{proposition}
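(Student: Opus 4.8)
The point to exploit is that the perturbation $Y_t$ enters \eqref{perturbed SIS model} only as a (random, time-dependent) \emph{coefficient}, not through a differential $dB_t$; hence for each fixed realisation of the path $t\mapsto Y_t(\omega)$ the equation \eqref{perturbed SIS model} is an ordinary differential equation in $I$ which can be solved pathwise. The plan is to work on the full-measure event $\Omega_0$ on which $t\mapsto Y_t(\omega)$ is continuous and nonnegative — such an event exists by Assumption \ref{assumptions}, since $Y$ is a nonnegative diffusion — fix $\omega\in\Omega_0$, and treat \eqref{perturbed SIS model} as the scalar ODE $\dot I_t = I_t\big((1-I_t)Y_t-\gamma\big)$ whose right-hand side is continuous in $t$ and smooth (hence locally Lipschitz) in $I$, so that a unique maximal solution exists.

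First I would reduce the equation to a linear one. Setting $u_t:=1/I_t$, legitimate as long as $I_t>0$, turns \eqref{perturbed SIS model} into the linear ODE $\dot u_t = u_t(\gamma-Y_t)+Y_t$, solved explicitly by variation of constants. Writing $\Gamma_t:=\int_0^t(\gamma-Y_s)\,ds$ and integrating the forcing term by parts exactly as in the derivation of \eqref{integral expression of I_t}, one gets
\begin{equation*}
u_t = 1 + e^{\Gamma_t}\Big(\tfrac1x-1\Big) + \gamma\,e^{\Gamma_t}\int_0^t e^{-\Gamma_s}\,ds ,
\end{equation*}
i.e.\ the representation \eqref{integral expression of I_t} with $\int_0^s\beta\,dr$ replaced by $\int_0^s Y_r\,dr$. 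Since $\Gamma_t$ is finite for every $t$ (the path $Y_\cdot(\omega)$ being continuous), this formula defines $u_t$ globally, so there is no finite-time blow-up and $I_t=1/u_t$ exists for all $t\ge0$.

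It then remains to read off the bounds. Because $x\in(0,1)$ we have $1/x-1>0$, and because $\gamma\ge0$, $Y_s\ge0$ and $e^{\pm\Gamma_t}>0$, every term added to the leading $1$ above is nonnegative; hence $u_t\ge 1+e^{\Gamma_t}(1/x-1)>1$ for all $t\ge0$. Consequently $I_t=1/u_t\in(0,1)$ for every $t$ on $\Omega_0$, and since $\mathbb{P}(\Omega_0)=1$ this yields $\mathbb{P}(I_t\in[0,1))=1$ for all $t\ge0$ (in fact $I_t\in(0,1)$ a.s.). A formula-free alternative is the same pathwise viewpoint plus two invariance remarks: $I\equiv0$ solves the ODE so uniqueness forbids $I_t$ from reaching $0$, and $\frac{d}{dt}(1-I_t)\big|_{I_t=1}=\gamma>0$ forbids $I_t$ from reaching $1$; but the explicit solution is cleaner and will be reused later.

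The only genuinely delicate point is the bookkeeping of the pathwise construction: one must check that the map $\omega\mapsto I_t(\omega)$ obtained this way is the true solution of \eqref{perturbed SIS model} (it is a continuous functional of the path $Y$, hence adapted and measurable) and that the substitution $u=1/I$ is valid precisely on $\{I>0\}$, which the a posteriori estimate $u_t>1$ shows to be all of $[0,\infty)$, so the logic is not circular. This is where I would spend most of the words; the computations reducing \eqref{perturbed SIS model} to the linear ODE and producing the displayed formula are routine.
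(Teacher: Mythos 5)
Your argument is correct, but it is a genuinely different (and considerably more detailed) route than the one the paper takes. The paper's proof is exactly your ``formula-free alternative'': it observes that the right-hand side of \eqref{perturbed SIS model} tends to $-\gamma<0$ as $I_t\to1$, so the barrier at $1$ cannot be reached, and that the right-hand side vanishes at $I_t=0$, so the solution cannot go negative --- a two-line qualitative invariance argument that implicitly uses the same pathwise-ODE viewpoint you make explicit. What you do instead is linearise the pathwise Riccati equation via $u_t=1/I_t$ and produce the explicit representation $u_t=1+e^{\Gamma_t}(1/x-1)+\gamma e^{\Gamma_t}\int_0^t e^{-\Gamma_s}\,ds$ (which checks out: it is precisely \eqref{integral expression of I_t} with $\int_0^s\beta\,dr$ replaced by $\int_0^s Y_r\,dr$), from which $u_t>1$ and hence $I_t\in(0,1)$ follow at once. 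Your version buys more: it settles global existence (no finite-time blow-up) rather than assuming it, it gives the strict open bounds $I_t\in(0,1)$ rather than $[0,1)$, it correctly flags and resolves the potential circularity in the substitution $u=1/I$ via the a posteriori bound, and the explicit formula is reusable downstream (e.g.\ for the ergodic arguments in Theorems \ref{Theorem extinction} and \ref{persistance theorem}). The paper's version buys brevity. One small remark: after the integration by parts the nonnegativity of $Y_s$ is not actually needed for the sign argument --- only $x\in(0,1)$, $\gamma\ge0$ and the positivity of the exponentials enter --- so your positivity conclusion holds for any continuous path $Y_\cdot(\omega)$, not just nonnegative ones.
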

	\begin{proof}
		We start by proving the solution $I_t<1$ for all $t>0$. It is because:
		$$\frac{dI_t}{dt}\xrightarrow{I_t\to 1}-\gamma<0.$$
		Hence the process can never reach $1$.
		
		Moreover because when $I_t=0$ the right hand side of the equation \eqref{perturbed SIS model} becomes 0 so the solution can't take negative values.
	\end{proof}
	
	\subsection{The Extinction of the Infection}
	\begin{theorem}\label{Theorem extinction}
		Let the assumptions in \ref{assumptions} be in force. Moreover let $\mathbb{E}[Y_\infty]$ denote the expectation of the process $Y_t$ with respect to the invariant measure. Define $R^S_0:=\mathbb{E}[Y_\infty]/\gamma$. As long as 
		$$R^S_0<1,\quad or \quad\mathbb{E}[Y_\infty]<\gamma,$$
		the infection extincts.
	\end{theorem}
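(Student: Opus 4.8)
The plan is to exploit the fact that, for each fixed Brownian path, equation \eqref{perturbed SIS model} is an ordinary (random) differential equation in $I_t$ driven by the continuous coefficient process $Y_t$: no stochastic integral acts on $I$ directly, so the whole argument can be run pathwise. By the boundedness proposition the solution exists for all time and stays in $[0,1)$, and since $I_0=x>0$ and the right-hand side of \eqref{perturbed SIS model} vanishes at $I_t=0$, it remains strictly positive. Writing $\frac{d}{dt}\log I_t = (1-I_t)Y_t-\gamma$ we get the exact representation
$$\log I_t=\log x+\int_0^t\big((1-I_s)Y_s-\gamma\big)\,ds.$$

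First I would derive a one-sided estimate. Using $0\le I_s<1$ and $Y_s\ge0$ (Assumption \ref{assumptions}), the quadratic term satisfies $-I_s^2Y_s\le 0$, hence $(1-I_s)Y_s\le Y_s$ and
$$\log I_t\le\log x+\int_0^t Y_s\,ds-\gamma t,\qquad\text{i.e.}\qquad I_t\le x\exp\!\left(\int_0^t Y_s\,ds-\gamma t\right).$$
Only an upper bound is required for extinction, and this is exactly the direction in which discarding the negative logistic term $-I_s^2Y_s$ is harmless.

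Next I would invoke the ergodicity of $Y_t$. By the law of large numbers for ergodic one-dimensional diffusions (Birkhoff's theorem for the stationary version, transferred to the given deterministic start $Y_0=y$ via the Markov property and continuity of paths), $\frac1t\int_0^tY_s\,ds\to\mathbb{E}[Y_\infty]$ almost surely, the limit being finite since the very definition $R^S_0=\mathbb{E}[Y_\infty]/\gamma$ presupposes integrability of $Y_\infty$ under the invariant measure. Dividing the previous display by $t$ and letting $t\to\infty$,
$$\limsup_{t\to\infty}\frac{\log I_t}{t}\le\mathbb{E}[Y_\infty]-\gamma<0$$
on the full-probability event where the ergodic average converges. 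Therefore $\log I_t\to-\infty$ at least linearly, so $I_t\to0$ almost surely, with exponential rate; this is the claimed extinction.

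The main obstacle — modest, but the only non-routine point — is the rigorous justification of $\frac1t\int_0^tY_s\,ds\to\mathbb{E}[Y_\infty]$: one has to read ``ergodic'' in Assumption \ref{assumptions} as: $Y_t$ admits a unique invariant probability measure $\mu$ with $\int y\,\mu(dy)<\infty$, and the time averages of $y\mapsto y$ converge for $\mu$-a.e.\ starting point, hence for the prescribed $Y_0=y\in(0,1)$. This is where the structure of the SDE \eqref{generic Y_t perturbation} together with Assumption \ref{assumptions} must actually be used; everything else is an elementary comparison of ODE solutions.
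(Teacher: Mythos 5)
Your proposal is correct and follows essentially the same route as the paper: a logarithmic Lyapunov function whose drift is bounded above by $Y_t-\gamma$, followed by the ergodic theorem for $\frac1t\int_0^tY_s\,ds$. The only difference is that you use $\log I_t$ (discarding $-I_sY_s\le 0$, which requires $Y_s\ge0$) where the paper uses $G(I_t)=\log\bigl(I_t/(1-I_t)\bigr)$ (discarding $\gamma-\gamma/(1-I_t)\le0$); both give the identical bound $\limsup_{t\to\infty}\frac1t\log I_t\le\mathbb{E}[Y_\infty]-\gamma<0$.
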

	\begin{proof}
		Define $G(x):=\ln\left(\frac{x}{1-x}\right)$ for $x\in[0,1)$, the differential of $G(x)$ is:
		\begin{align*}
			d\Big( G(I_t) \Big) &= \frac{1}{I_t(1-I_t)}dI_t=Y_tdt - \frac{\gamma}{1-I_t}dt\leq\left( Y_t - \gamma \right)dt,
		\end{align*}
		because $1/(1-x):[0,1)\to[1,\infty)$. Integrating both sides from $0$ to $t$ yield:
		\begin{align*}
			G(I_t)&\leq\ln\left(\frac{x}{1-x}\right)+\int_0^tY_sds -\gamma t\\
			\Rightarrow\limsup_{t\to\infty}\limits\frac{1}{t}G(I_t)&\leq \limsup_{t\to\infty}\limits\frac{1}{t}\ln\left(\frac{x}{1-x}\right)+\limsup_{t\to\infty}\limits\frac{1}{t}\int_0^tY_sds - \gamma\\
			&=\limsup_{t\to\infty}\limits\frac{1}{t}\int_0^tY_sds - \gamma = \mathbb{E}[Y_\infty]-\gamma.
		\end{align*}
		Where the last equality is true because $Y_t$ is assumed to be ergodic in assumption \ref{assumptions}. Because the right hand side of the last equality is negative, it implies that:
		$$\lim_{t\to\infty}I_t=0,$$
		hence extinction.
	\end{proof}
	\begin{remark}
		The intuition is, in the large time limit, the extinction will occur if the disease transmission coefficient is on average less than the recovery rate with respect to the stationary distribution.
		\label{remark extinction of infection}
	\end{remark}
	
	\subsection{The Persistance Of The Infection}
	
	The main proof methodology is from \cite{graySISpaper} and \cite{ourSISpaper}.
	\begin{theorem}\label{persistance theorem}
		Consider the function $f(x):=\frac{-1}{1-x}$ and let the assumptions in \ref{assumptions} be in force. Similarly define $R^S_0:=\mathbb{E}[Y_\infty]/\gamma$. As long as $$R^S_0>1,\quad or \quad\mathbb{E}[Y_\infty]>\gamma,$$
		then the infection persists. That is
		$$\liminf_{t\to\infty}I_t\leq \mathtt{I}^*\quad and \quad\limsup_{t\to\infty}I_t \geq \mathtt{I}^*,$$
		where $\mathtt{I}^*:=f^{-1}\left(\frac{-\mathbb{E}[Y_\infty]}{\gamma}\right)$.
	\end{theorem}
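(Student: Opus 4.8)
The plan is to prove the two inequalities separately, each by contradiction via a pathwise time-averaging argument that reuses the machinery of Theorem \ref{Theorem extinction}. The only input needed is the identity already established there: with $G(x)=\ln\!\big(\tfrac{x}{1-x}\big)$ one has $\frac{d}{dt}G(I_t)=Y_t-\frac{\gamma}{1-I_t}$, i.e. in integrated form $G(I_t)=G(x)+\int_0^t Y_s\,ds-\int_0^t\frac{\gamma}{1-I_s}\,ds$. It is convenient to note that the threshold $\mathtt{I}^*=f^{-1}\!\big(-\mathbb{E}[Y_\infty]/\gamma\big)$, with $f(x)=\frac{-1}{1-x}$, is exactly the level at which $\frac{\gamma}{1-\mathtt{I}^*}=\mathbb{E}[Y_\infty]$, equivalently $\mathtt{I}^*=1-\gamma/\mathbb{E}[Y_\infty]\in[0,1)$ (using $\mathbb{E}[Y_\infty]>\gamma$). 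Thus the "running recovery pressure" $\frac{\gamma}{1-I_t}$ is pinched below or above $\mathbb{E}[Y_\infty]$ precisely according to whether $I_t<\mathtt{I}^*$ or $I_t>\mathtt{I}^*$, and this is the dichotomy the whole argument exploits.

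For $\limsup_{t\to\infty}I_t\geq\mathtt{I}^*$, I would assume the contrary on an event of positive probability. On that event there are (random) $\varepsilon>0$ and $T<\infty$ with $I_t\leq\mathtt{I}^*-\varepsilon$ for all $t\geq T$, hence $\frac{\gamma}{1-I_t}\leq\frac{\gamma}{1-\mathtt{I}^*+\varepsilon}=:\mathbb{E}[Y_\infty]-\delta$ for some $\delta>0$. Inserting this bound into the integrated identity, dividing by $t$, and sending $t\to\infty$: the boundary term $G(I_T)/t$ vanishes, and the ergodicity hypothesis gives $\frac1t\int_0^t Y_s\,ds\to\mathbb{E}[Y_\infty]$, so that $\liminf_{t\to\infty}\frac1t G(I_t)\geq\delta>0$. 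This forces $G(I_t)\to\infty$, i.e. $I_t\to1$, contradicting $I_t\leq\mathtt{I}^*-\varepsilon<1$; so the event is null. The companion inequality $\liminf_{t\to\infty}I_t\leq\mathtt{I}^*$ is the mirror image: assuming $I_t\geq\mathtt{I}^*+\varepsilon$ eventually on a positive-probability event yields $\frac{\gamma}{1-I_t}\geq\frac{\gamma}{1-\mathtt{I}^*-\varepsilon}=:\mathbb{E}[Y_\infty]+\delta$ with $\delta>0$ (choosing $\varepsilon$ small enough that $\mathtt{I}^*+\varepsilon\in(0,1)$), whence $\limsup_{t\to\infty}\frac1t G(I_t)\leq-\delta<0$, forcing $I_t\to0$ and contradicting $I_t\geq\mathtt{I}^*+\varepsilon>0$. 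Combining the two yields the stated oscillation around $\mathtt{I}^*$.

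The routine parts are the elementary algebra identifying $\mathtt{I}^*$ with $1-\gamma/\mathbb{E}[Y_\infty]$, and the bookkeeping with the random time $T$ and the margin $\varepsilon$ — since every estimate is divided by $t\to\infty$, the segment $[0,T]$ contributes nothing, and $\varepsilon$ need only be taken small enough to keep $\mathtt{I}^*\pm\varepsilon$ strictly inside $(0,1)$, which simultaneously makes the one-sided pinch of $\frac{\gamma}{1-I_t}$ strict so that $\delta>0$. The single genuine ingredient — and the place I expect the real content to sit — is the almost-sure ergodic average $\frac1t\int_0^t Y_s\,ds\to\mathbb{E}[Y_\infty]$, which is precisely what the ergodicity clause of Assumption \ref{assumptions} is meant to guarantee (and which implicitly needs $Y_\infty$ to be integrable under the invariant law, so that $\mathbb{E}[Y_\infty]<\infty$ and $R^S_0$ is well defined). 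Since this is exactly the fact already invoked in the proof of Theorem \ref{Theorem extinction}, no new analytic work beyond citing it is required, and the rest is the contradiction scheme above.
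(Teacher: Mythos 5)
Your proposal is correct and follows essentially the same route as the paper: the same contradiction scheme on a positive-probability event, the same integrated identity for $G(I_t)=\ln\bigl(\tfrac{I_t}{1-I_t}\bigr)$, and the same use of ergodicity to replace $\tfrac1t\int_0^t Y_s\,ds$ by $\mathbb{E}[Y_\infty]$; your observation that $\mathtt{I}^*=1-\gamma/\mathbb{E}[Y_\infty]$ is just the paper's condition $f(\mathtt{I}^*)=-\mathbb{E}[Y_\infty]/\gamma$ written explicitly. No gaps.
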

	\begin{proof}
		Note that $f:[0,1)\to[-1,-\infty)$ and $f$ is a monotone decreasing function, we start by the following assertion:
		
		Suppose $\liminf_{t\to\infty}I_t \leq \mathtt{I}^*$ is wrong, then $\exists\:\epsilon>0$ such that:
		$$\mathbb{P}(\Omega_1)>\epsilon,\quad \textnormal{where}\quad \Omega_1:=\left\{\liminf_{t\to\infty}I_t\geq \mathtt{I}^*+\epsilon\right\}.$$
		Therefore, for all $\omega\in\Omega_1$, there exists $T(\omega)\geq0$ such that
		$$I_t\geq \mathtt{I}^*+\epsilon,\quad\textnormal{for all }t\geq T(\omega).$$
		The monotonicity of $f$ yields:
		\begin{equation}\label{f upperbound}
			f(I_t)\leq f(\mathtt{I}^*+\epsilon),\quad\textnormal{for all }t\geq T(\omega).
		\end{equation}
		When It\^o is applied to the function $G(I_t)$ as done in Theorem \ref{Theorem extinction}:
		\begin{align*}
			\limsup_{t\to\infty}\frac{1}{t}\ln\left(\frac{I_t}{1-I_t}\right)&\leq\limsup_{t\to\infty}\frac{1}{t}\ln\left(\frac{x}{1-x}\right) +\limsup_{t\to\infty}\frac{1}{t}\int_0^tY_sds +\gamma\limsup_{t\to\infty}\frac{1}{t}\int_0^tf(I_s)ds,\\
			&=\mathbb{E}[Y_\infty]+\gamma\limsup_{t\to\infty}\frac{1}{t}\int_0^tf(I_s)ds,\\
			&\leq\mathbb{E}[Y_\infty]+\gamma\left[ \limsup_{t\to\infty}\frac{1}{t}\int_0^Tf(I_s)ds+f(\mathtt{I}^*+\epsilon)\limsup_{t\to\infty}\frac{t-T}{t} \right],\\
			&=\mathbb{E}[Y_\infty]+\gamma f(\mathtt{I}^*+\epsilon)<0.
		\end{align*}
		The last inequality is due to the fact that $f(\mathtt{I}^*+\epsilon)< f(\mathtt{I}^*)=\frac{-\mathbb{E}[Y_\infty]}{\gamma}$. Which is a contradiction because this means 
		$$\lim_{t\to\infty}I_t=0.$$
		Then the first assertion has to be true, which is:
		$$\liminf_{t\to\infty}I_t\leq \mathtt{I}^*.$$
		The first inequality is therefore proven.
		
		In order to prove the other part assume the claim $\limsup_{t\to\infty}I_t\geq \mathtt{I}^*$ is wrong. This means $\exists\:\epsilon>0$ such that:
		$$\mathbb{P}(\Omega_2)>\epsilon\quad \textnormal{where} \quad\Omega_2:=\left\{ \limsup_{t\to\infty}I_t\leq \mathtt{I}^*-\epsilon \right\}.$$
		
		Therefore for all $\omega\in\Omega_2$, $\exists\:T(\omega)\geq0$ such that
		$$I_t\leq \mathtt{I}^*-\epsilon,\quad \textnormal{for all }t\geq T(\omega).$$
		Moreover, due to the monotonicity of $f$, one obtains:
		$$f(I_t)\geq f(\mathtt{I}^*-\epsilon)\quad\textnormal{for all }t\geq T(\omega).$$
		Now similarly using the differential of $G(I_t)$ one can write down the inequality:
		\begin{align*}
			\liminf_{t\to\infty}G(I_t)&\geq \liminf_{t\to\infty}\frac{1}{t}\int_0^tY_sds + \liminf_{t\to\infty}\frac{1}{t}\gamma\int_0^tf(I_s)ds\\
			&\geq\mathbb{E}[Y_\infty]+\gamma f(\mathtt{I}^*-\epsilon)>0.
		\end{align*}
		Because $f(\mathtt{I}^*-\epsilon)>f(\mathtt{I}^*)=\frac{-\mathbb{E}[Y_\infty]}{\gamma}$, the last term on the right hand side is positive. Leading to
		$$\lim_{t\to\infty}I_t=1,$$
		which is a contradiction. So the first assertion has to be true:
		$$\limsup_{t\to\infty}I_t\geq \mathtt{I}^*.$$
	\end{proof}
	\begin{remark}
		Note that the condition $\mathbb{E}[Y_\infty]>\gamma$ is necessary in order to find the number $\mathtt{I}^*$ in interval $(0,1)$. Because if $\frac{\mathbb{E}[Y_\infty]}{\gamma}>1$, then $-\frac{\mathbb{E}[Y_\infty]}{\gamma}<-1$ and we can find $\mathtt{I}^*=f^{-1}\left(-\frac{\mathbb{E}[Y_\infty]}{\gamma}\right)$ such that $\mathtt{I}^*\in(0,1)$.
		
		The intuition is that, in the large time limit, the persistance will take place if the disease transmission coefficient is on average more than the recovery rate with respect to the stationary distribution.
		\label{remark persistence of infection}
	\end{remark}
	
%
%
	
	\begin{remark}
		Note that the results of theorems \ref{Theorem extinction} and \ref{persistance theorem} does not change based on the amplitude of the noise $\sigma$. It makes this model useful because one is able to predict the dynamics of the infection without the need to infer the amplitude of the noise. Such a property was absent in the model presented in \cite{graySISpaper}.
		\label{remark no dependence on sigma}
	\end{remark}
	
	\section{Example Diffusions Satisfying Assumptions \ref{assumptions}}\label{section diffusion examples}
	Here some one dimensional diffusions will be listed which satisfies the assumptions \ref{assumptions}.
		\subsection{Stochastic Logistic Equation}
	The Stochastic Logistic Equation is obtained when the functions $a(y)$ and $b(y)$ are chosen as:
	$$a(y)=y(a-by)\quad \textnormal{and}\quad b(y)=\sigma y,$$
	where $a,b,\sigma>0$. This choice now reads as:
	$$dY_t=Y_t(a-bY_t)dt + \sigma Y_tdB_t,\qquad Y_0=y\in(0,1),$$
	for our perturbation.
	
	Similar to before, $\{Y_t\}_{t>0}$ is a suitable perturbation candidate because it is almost surely positive for $t>0$, \cite{stochastic_logistic_paper}. Moreover, when $2a>\sigma^2$, then $\{Y_t\}_{t\geq0}$ is ergodic and has a stationary distribution as again, the gamma distribution \cite{stochastic_logistic_paper} with parameters: $\lambda:=2a/\sigma^2-1$ and $\omega:=2b/\sigma^2$, then the stationary distribution can be written as:
	$$\rho_{stat}(x,a,b,\sigma) = \frac{\omega^{\lambda}}{\Gamma(\lambda)}x^{\lambda-1}e^{-\omega x}.$$
	with the expectation of the stationary distribution being:
	$$\mathbb{E}[Y_\infty]=\frac{2a-\sigma^2}{2b}.$$
	
	Where similarly this identity can be substituted in all of the previous results too.	
	\subsection{The Cox–Ingersoll–Ross (CIR) Model}
	The CIR model corresponds to our generic perturbation when 
	$$a(y) = a(b-y)\quad\textnormal{and}\quad b(y)=\sigma \sqrt{y},$$
	where $a,b,\sigma \in\mathbb{R}^+.$
	 
	So the equation \eqref{generic Y_t perturbation} can be chosen to be:
	\begin{equation}
		dY_t = a(b-Y_t)dt + \sigma \sqrt{Y_t}dB_t,\qquad Y_0=y\in(0,1).
		\label{CIR process eqn}
	\end{equation}
	
	It is well known in literature that, when $2ab/\sigma^2>1$, $\{Y_t\}_{t>0}$ that is, strictly positive for all $t>0$ and ergodic, see \cite{CIR_model_ergodicity, CIR_model_positivity}. These properties make the CIR model a very suitable candidate for perturbing the SIS model in a natural way and analyze it robustly. Let $\omega:=2a/\sigma^2$ and $\lambda:=2ab/\sigma^2$, then the stationary distribution of $Y_\infty$ is the gamma distribution \cite{CIR_model_ergodicity}:
	$$\rho_{stat}(x;a,b,\sigma) = \frac{\omega^\lambda}{\Gamma(\lambda)}x^{\alpha-1}e^{-\omega x}.$$
	which makes $\mathbb{E}[Y_\infty]=b$. So all of the previous results can be substituted with $b$ if the chosen perturbation is the CIR process.
	
	\begin{remark}
		When one chooses $y=b=\beta$, then the expectation of the CIR process is:
		$$\mathbb{E}[Y_t] = ye^{-at} + b(1-e^{-at})=\beta,\quad\forall t>0.$$
		
		In other words this way, one can have the deterministic parameter value $\beta$ as the expectation of the perturbed process. This case will be investigated deeper in the following sections.
		\label{CIR model fixed expectation remark}
	\end{remark}
	
	\subsection{The simulation Results with CIR perturbation in model \eqref{perturbed SIS model}}
	It would be interesting to use a perturbation of the SIS model with a positive process that has constant expectation. For this reason the CIR model will be utilized with 2 different cases:
	\subsubsection{The $R^S_0<1$ case:}
	\begin{figure}[h!]
		\centering
		\includegraphics[width=0.7\textwidth]{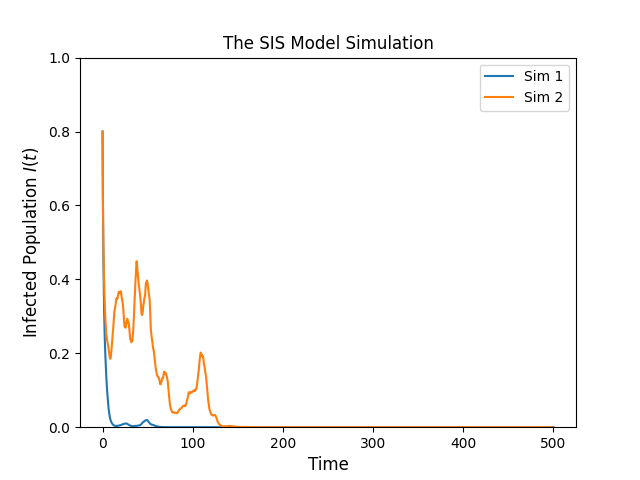}
		\caption{The results of two simulations of the model \eqref{perturbed SIS model} with parameters: $\beta=y=b=0.89$, $\gamma = 0.92$, $\sigma=0.1$, $a=0.05$ and $x=0.8$. Note that $R^S_0 < 1$,  $\mathbb{E}[Y_t]=0.89,\:\forall t>0$, due to remark \ref{CIR model fixed expectation remark} and $2ab/\sigma^2 = 8.9>1$ ensuring positivity of the perturbation.}
		\label{figure SIS extinction}
	\end{figure}
	
	From figure \ref{figure SIS extinction}, one can see the extinction of the infection, due to the disease transmission coefficient's on average value $\mathbb{E}[Y_t]$ being smaller than the recovery rate $\gamma$. 
	
	\subsubsection{The $R^S_0>1$ case:}
	\begin{figure}[h!]
		\centering
		\includegraphics[width=0.7\textwidth]{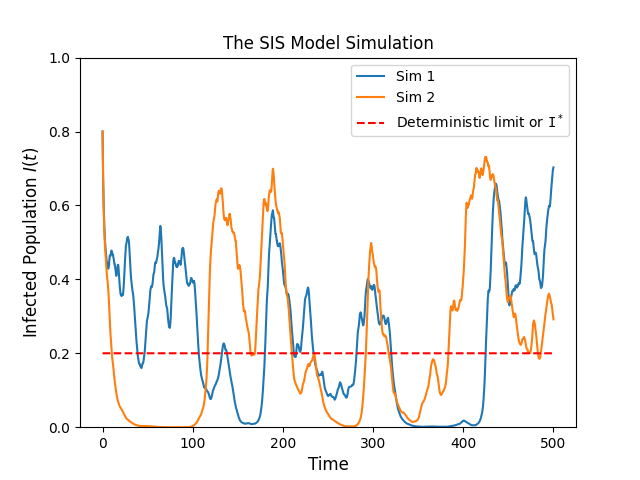}
	\caption{The figure showing the results of two simulations with parameters: $\beta=y=b=0.5$, $\gamma = 0.4$, $\sigma=0.1$, $a=0.05$ and $x=0.8$. Note that $R^S_0 = 1.5>1$, $\mathbb{E}[Y_t]=0.5,\:\forall t>0$, due to remark \ref{CIR model fixed expectation remark} and $2ab/\sigma^2 = 5 >1$ ensuring positivity of the perturbation. The deterministic limit or $\mathtt{I}^*=f^{-1}(-5/4)=0.2$.}
	\label{figure SIS persistence}
	\end{figure}
	
	In figure \ref{figure SIS persistence}, the persistent dynamics of the CIR perturbation is reported. Due to the theorem \ref{persistance theorem}, it is found that the infected population will oscillate above and below the point $\mathtt{I}^*$, which corresponds to the limit of the deterministic SIS model \eqref{deterministic SIS model}.
	
%
%
	
	\section{On Average Analysis of the Model \eqref{perturbed SIS model}}\label{section on average analysis}
	It is accustomed in literature to come up with a perturbation method to the SIS model and then provide the limiting conditions for its long term behavior. In this section instead, we want to trace the following types of question: \emph{How does this perturbation change the model \eqref{deterministic SIS model} on average?} In other words, our main aim is to come up with an analytical technique to analyze the dynamics of the moments of the model \eqref{perturbed SIS model}. In the literature, the study \cite{ccetin2025approximate}, proposed an iterative way for approximating the moments of a general stochastic logistic equation in the form:
	$$dX_t:=(AX_t-\delta X_t^r)dt + \sigma X_t dB_t,$$
	where $A\in\mathbb{R}$, $\delta\geq0$ and $r,\sigma>0$. However because all of the stochastic SIS models considered in this paper displays a non-linear diffusion term, another approach will be necessary for our analysis.
	
	Another method to answer such question, albeit being a crude method in literature, is to use mean-field approximation, which is very common in physics \cite{meanfield_isingmodel_paper}. This method is used in SIS models too, especially in complex networks framework \cite{meanfield_SIS_graph_paper} where this approach reduces the complexity of the interaction term between nodes. The logic is similar here, note that $S_t = 1-I_t$, then the term $I_tS_tY_t$ in expression \eqref{perturbed SIS model} is quantifying a complex interaction term considering all individuals. Using the mean field approximation, instead of considering all the randomness effects from all individuals, one only considers an on average effect of disease transmission in total and write:
	
	\begin{equation}
		\mathbb{E}[I_t(x,Y_t,\gamma)] \approx I_t(x,\mathbb{E}[Y_t], \gamma).
		\label{mean field SIS approximation}
	\end{equation}
	
	Of course the relation \eqref{mean field SIS approximation} is not true given the solution $I_t$ is non-linear but it simplifies the analysis a lot and provide a somewhat approximation of the solution. This approach may seem like a step back, because now from the stochastic model we developed in section \ref{section SIS pertubration} we jumped back to a deterministic model again with $Y_t$ being replaced by its expectation $\mathbb{E}[Y_t]$. So according to this approach, the deterministic version of this model, is an approximation of the stochastic one.
	
	In the following sections we provide a way of evaluating \emph{correcting terms} to the crude mean-field approximation within a formal framework. In section \ref{section feynman kac} we convert our SDE \eqref{perturbed SIS model} to the associated Feynman-Kac PDE (fkPDE), in section \ref{section perturbation theory} the Perturbation Theory will be used to convert the fkPDE to an infinite system of coupled PDEs and the method to solve this infinite system will be presented. Lastly in section \ref{section results correction terms} the results will be reported with the first correction term.
	
	\subsection{The Associated Feynman-Kac Equation}\label{section feynman kac}
	
	In order to do so we rewrite the process \eqref{generic Y_t perturbation} by introducing another parameter $c\in[0,1)$ in a slightly different format:
	
	\begin{equation}\label{perturbation Y_t scaled with c}
		dY_t := c\tilde{a}(Y_t)dt + \sqrt{c}\tilde{b}(Y_t)dB_t,\quad Y_0=y\in(0,1).
	\end{equation}
	
	Where $a(y)=c\tilde{a}(y)$ and $b(y)=\sqrt{c}\tilde{b}(y)$ in the expression \eqref{generic Y_t perturbation}. The advantage of this new format will be apparent when the Feynman-Kac formula is utilized for the perturbed SIS model \eqref{perturbed SIS model}. Define $\nu(t, I_t, Y_t):=\varphi(I_t(Y_t))$ for a sufficiently well-behaved function $\varphi:[0,1]\to\mathbb{R}$, apply It\^o formula on $\nu(t, I_t, Y_t)$ and then set $u(t,x,y):=\mathbb{E}[\nu(t,I_t, Y_t)|I_0=x, Y_0=y]$ or equivalently $u(t,x,y)=\mathbb{E}[\varphi(I_t)|I_0=x, Y_0=y]$. Then $u(t,x,y)$, solves:
	
	\begin{equation}\label{feynman kac formula for u(t,x,y)}
		\frac{\partial u}{\partial t} = \Big(yx(1-x)-\gamma x\Big)\frac{\partial u}{\partial x}+c\Big( \frac{1}{2}\tilde{b}^2(y)\frac{\partial^2 u}{\partial y^2}+\tilde{a}(y)\frac{\partial u}{\partial y} \Big),\quad u(0,x,y) = \varphi(x).
	\end{equation}
	
	This is the associated Feynman-Kac equation of the perturbed SIS model with respect to the diffusion \eqref{perturbation Y_t scaled with c}. One can see that the role of the scaling of $c$ in \eqref{perturbation Y_t scaled with c} is to appropriately add an extra term as the perturbation in \eqref{feynman kac formula for u(t,x,y)}. If one would instead consider $dY_t:=c\tilde{a}(Y_t)dt+c\tilde{b}(Y_t)dB_t$, the effect of the diffusion would be scaled with $c^2$ in \eqref{feynman kac formula for u(t,x,y)}. So in order to give similar importance to both the drift and diffusion of the perturbation in $u(t,x,y)$, the appropriate scaling in expression \eqref{perturbation Y_t scaled with c} is chosen.
	
	\subsection{The Perturbation Theory Approach}\label{section perturbation theory}
	Suppose the solution of the equation \eqref{feynman kac formula for u(t,x,y)} is in the power series form:
	$$u(t,x,y)=\sum_{n\geq0}u_n(t,x,y)c^n,$$
	for a series of functions $u_n$. When this power series expansion is substituted in equation \eqref{feynman kac formula for u(t,x,y)} and all terms with the same order of $c$ is collected together, one obtains a series of equations:
	\begin{align}
		\begin{split}
			\frac{\partial u_0}{\partial t} = \Big(yx(1-&x) - \gamma x\Big)\frac{\partial u_0}{\partial x},\\
			\frac{\partial u_1}{\partial t} = \Big( yx(1-x)-\gamma x \Big)\frac{\partial u_1}{\partial x} &+ \Big(\frac{1}{2} \tilde{b}^2(y)\frac{\partial^2 u_0}{\partial y^2}+\tilde{a}(y)\frac{\partial u_0}{\partial y}\Big),\\
			\frac{\partial u_2}{\partial t} = \Big( yx(1-x)-\gamma x \Big)\frac{\partial u_2}{\partial x} &+ \Big(\frac{1}{2} \tilde{b}^2(y)\frac{\partial^2 u_1}{\partial y^2}+\tilde{a}(y)\frac{\partial u_1}{\partial y}\Big),\\
			&\vdots
		\end{split}
		\label{eqn system of u with correction terms}
	\end{align}
	with the initial conditions $u_0(0,x,y)=\varphi(x)$ and $u_n(0,x,y)=0$ for $n\geq1$. Note that the system of partial differential equations (PDEs) are solvable because once the solution of the previous line is found,  it only generates the non-homogeneous term on the next one and the first state is just the solution of the deterministic SIS model. This brings us to the following proposition:
	
	\begin{proposition}
		The solution to the system \eqref{eqn system of u with correction terms} is given as:
		\begin{equation}
			u_n(t,x,y) = \int_{0}^{t}\left[ 	\frac{1}{2}\tilde{b}^2(y)\frac{\partial^2u_{n-1}(s, I^D_{t-s}(y, x), y)}{\partial y^2}+\tilde{a}(y)\frac{\partial u_{n-1}(s, I^D_{t-s}(x,y), y)}{\partial y} \right]ds,
			\label{solution to correction terms system}
	\end{equation}
		for $n\geq1$ and: 
		\begin{equation}
			u_0(t,x,y)=\varphi(I^D_{t}(x,y))
			\label{u0 solution}
		\end{equation}
		where $I^D_t$ is the solution of the deterministic SIS model reported in \eqref{deterministic infected pop soln}.
		\label{proposition solving system correction terms}
	\end{proposition}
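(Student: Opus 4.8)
The plan is to notice that, for each fixed value of the spectator variable $y$, every line of the system \eqref{eqn system of u with correction terms} is a linear first‑order transport equation in the variables $(t,x)$,
\[
\partial_t u_n - V_y(x)\,\partial_x u_n = F_n(t,x,y),\qquad V_y(x):=yx(1-x)-\gamma x,
\]
with $F_0\equiv 0$ and, for $n\geq 1$, $F_n(t,x,y):=\tfrac12\tilde b^2(y)\,\partial_{yy}u_{n-1}(t,x,y)+\tilde a(y)\,\partial_y u_{n-1}(t,x,y)$. The characteristic flow of $V_y$ is precisely $x\mapsto I^D_t(x,y)$, the deterministic SIS trajectory \eqref{deterministic infected pop soln} started at $x$ with transmission rate $y$, so such equations can be solved explicitly by the method of characteristics together with Duhamel's principle, and the formulas \eqref{u0 solution}--\eqref{solution to correction terms system} are exactly what this procedure yields. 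The argument then runs by induction on $n$, the base case being the homogeneous equation for $u_0$. It suffices to exhibit one solution of each line with the prescribed initial datum, since the transport equation has a unique solution.

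For the base case I would first record the flow (semigroup) identity $I^D_{t+s}(x,y)=I^D_t\bigl(I^D_s(x,y),y\bigr)$, which is immediate from uniqueness of solutions of the scalar ODE \eqref{deterministic SIS model} with parameter $y$. Differentiating this identity in $s$ at $s=0$, and using that $\tfrac{d}{ds}I^D_s(x,y)\big|_{s=0}=V_y(x)$, gives the key transport relation
\[
\partial_t I^D_t(x,y)=V_y(x)\,\partial_x I^D_t(x,y),
\]
where $\partial_t$ denotes differentiation in the time index and $\partial_x$ in the initial‑value slot; thus $(t,x)\mapsto I^D_t(x,y)$ itself solves the homogeneous transport equation. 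Composing with $\varphi$ and applying the chain rule shows that $u_0(t,x,y)=\varphi\bigl(I^D_t(x,y)\bigr)$ solves the first line of \eqref{eqn system of u with correction terms}, while $u_0(0,x,y)=\varphi(x)$ matches the initial condition.

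For the inductive step, assume $u_{n-1}$ is the (sufficiently smooth) solution of the $(n-1)$-st equation, so that $F_n(\cdot,\cdot,y)$ is a well‑defined smooth source. Define $u_n$ by \eqref{solution to correction terms system}, i.e. $u_n(t,x,y)=\int_0^t F_n\bigl(s,\,I^D_{t-s}(x,y),\,y\bigr)\,ds$, with the convention that in $F_n$ the $y$-derivatives act on the third slot of $u_{n-1}$ and the substitution of the middle argument by $I^D_{t-s}(x,y)$ is performed afterwards. Differentiating under the integral sign (Leibniz rule) and using $I^D_0(x,y)=x$ gives
\[
\partial_t u_n(t,x,y)=F_n(t,x,y)+\int_0^t (\partial_\xi F_n)\bigl(s,I^D_{t-s}(x,y),y\bigr)\,\partial_t I^D_{t-s}(x,y)\,ds,
\]
and replacing $\partial_t I^D_{t-s}(x,y)$ by $V_y(x)\,\partial_x I^D_{t-s}(x,y)$ via the transport relation, then comparing with $V_y(x)\,\partial_x u_n$ (again by differentiation under the integral), yields $\partial_t u_n - V_y(x)\partial_x u_n = F_n$, while clearly $u_n(0,x,y)=0$. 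By uniqueness this $u_n$ is the solution of the $n$-th line, closing the induction.

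The main obstacle is bookkeeping rather than conceptual: one must (i) fix the convention that the $y$-differentiations in the source are performed \emph{before} the characteristic substitution $x\mapsto I^D_{t-s}(x,y)$, since the notation in \eqref{solution to correction terms system} is otherwise ambiguous; and (ii) justify the repeated differentiation under the integral sign and the chain‑rule manipulations, which requires propagating enough regularity through the induction. This is precisely where the standing hypothesis that $\varphi$ be ``sufficiently well-behaved'' (say $\varphi\in C^{2n}$ for the first $n$ correction terms) and that $\tilde a,\tilde b$ be smooth enough is used; the explicit formula \eqref{deterministic infected pop soln} for $I^D_t(x,y)$, which is $C^\infty$ in $(t,x,y)$ and valued in $(0,1)$ for $x\in(0,1)$, supplies all the smoothness and non‑degeneracy needed.
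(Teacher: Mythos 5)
Your proposal is correct and follows essentially the same route as the paper: treat $y$ as a parameter, recognize each line of \eqref{eqn system of u with correction terms} as a linear transport equation whose characteristics are exactly the deterministic SIS trajectories $s\mapsto I^D_{t-s}(x,y)$, and iterate with Duhamel's principle. The only difference is presentational --- the paper \emph{derives} \eqref{from u1 to generic un explicit solution} by integrating along the characteristic with terminal condition $x(t)=x$, whereas you \emph{verify} the candidate formula by differentiating under the integral using the flow identity $\partial_t I^D_t(x,y)=\bigl(yx(1-x)-\gamma x\bigr)\partial_x I^D_t(x,y)$ --- and your explicit attention to uniqueness and to the regularity needed for the Leibniz rule is a point the paper leaves implicit.
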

	\begin{proof}
		Proving the equality of $u_0$ in expression \eqref{u0 solution} is a direct application of the Feynman-Kac formula on the $\varphi(I^D_t)$, where $I^D_t$ is the solution to the deterministic SIS model \eqref{deterministic SIS model} with initial conditions $I^D_0=x$ and $Y_0=y$.
		
		When one wants to solve for $u_1(t,x,y)$, it is shown that the solution of the previous state $u_0$ only enters as a non-homogeneous term in the system \eqref{eqn system of u with correction terms}. Let $\alpha(x,y):=yx(1-x)-\gamma x$ and $f(t,x,y)$ be the non-homogeneous term $\frac{1}{2} \tilde{b}^2(y)\frac{\partial^2 u_0(t,x,y)}{\partial y^2}+\tilde{a}(y)\frac{\partial u_0(t,x,y)}{\partial y}$, then the problem is converted to the form
		\begin{equation}\label{from u1 to generic un}
			\frac{\partial u_1}{\partial t} = \alpha(x,y)\frac{\partial u_1}{\partial x} + f(t,x,y),\quad u_1(0,x,y)=0.
		\end{equation}
		Following the method of characteristics, we want to solve the PDE along a certain path $x(s)$ so that, along that path the PDE acts as a simple ODE. In order to do so suppose that path satisfies the relation:
		\begin{equation}\label{method of characteristics path x(t)}
			\frac{dx(s)}{ds}=-\alpha(x(s),y)=-yx(s)(1-x(s))+\gamma x(s),\quad\textnormal{where }s<t\textnormal{ and }x(t)=x.
		\end{equation}
		
		The terminal condition $x(t)=x$ satisfies that at time $t$, solution $u_1(t,x(t),y)$ will depend on the variable $x$ as desired. Note equation \eqref{method of characteristics path x(t)} is nothing but the deterministic SIS ODE \eqref{deterministic SIS model} with a terminal condition $x(t)=x$ instead of an initial one and an additional minus sign. The solution can be found similarly as:
		\begin{equation}\label{x(s) explicit solution}
			x(s)=I^D_{t-s}(x,y)=\frac{x(e^{(t-s)(y-\gamma)}(y-\gamma)}{y-\gamma-xy\big( 1-e^{(t-s)(y-\gamma)} \big)}
			\quad\textnormal{for }0<s<t. 
		\end{equation}
		When this relation is substituted to the PDE, one obtains a single ODE and the solution of this ODE is given as:
		\begin{equation}\label{from u1 to generic un explicit solution}
			u_1(t,x,y)=\int_{0}^tf\big(s,I^D_{t-s}(x,y),y\big)ds.
		\end{equation}
		Finally note that once $u_1$ is evaluated the same process can be applied to $u_2$ and so on. The results obtained in expressions \eqref{from u1 to generic un}, \eqref{method of characteristics path x(t)} and \eqref{x(s) explicit solution} does not depend on the $u_1$ at all, it only depends on the solution on the previous state $u_0$ via the non-homogeneous term $f(t,x,y)$. Hence the formula \eqref{from u1 to generic un explicit solution} is generic and can be applied to any $n$ as \eqref{solution to correction terms system}. This completes the proof.
	\end{proof}
	
	\begin{remark}\label{remark small correction}
		Note that the first line corresponds to the solution of the deterministic SIS model. This means, due to the form of the solution being: $$u(t,x,y)=u_0(t,x,y) + cu_1(t,x,y)+c^2u(t,x,y)+\dots$$ 
		all the terms with with positive power of $c$ can be viewed as a correction term that modifies the deterministic solution $u_0$. This way of expanding a solution of a single PDE \eqref{feynman kac formula for u(t,x,y)} to a coupled system of PDEs \eqref{eqn system of u with correction terms} in increasing orders of "$c$" is called the Perturbation Theory and it is common to find some applied examples in finance (see \cite{perTh_finance_paper2, pertTh_finance_paper}). The advantage of this approach is when the unperturbed model is analytically solvable, one can build the perturbed solution based on modifying the unperturbed one via the correction terms, as we have here.
		
		Moreover because $c$ only enters with increasing powers, for a small $c<<1$, the first few terms should be enough to show the effect of correction. So for $c<<1$, $$\mathbb{E}[\varphi(I_t)|I_0=x, Y_0=y]=u(t,x,y)\approx u_0(t,x,y)+cu_1(t,x,y)$$.
	\end{remark}
\subsection{Results on Expectation of the Model \eqref{perturbed SIS model} with the CIR Perturbation}\label{section results correction terms}
Take $\varphi(x)=x$ in equation \eqref{feynman kac formula for u(t,x,y)} and this way $u^{(1)}(t,x,y)=\mathbb{E}[I_t|I_0=x, Y_0=y]$, we use $u^{(1)}$ to denote $\varphi(x)=x$ choice. Then the form of the solution we are searching form read as $u^{(1)}(t,x,y)=\sum_{n\geq0}u^{(1)}_n(t,x,y)c^n$.

In this section we are interested in the first correction term. In order to come up with it, the system \eqref{eqn system of u with correction terms} is solved via the proposition \ref{proposition solving system correction terms}. Because evaluating the integral of first and second or derivatives of $u^{(1)}_0$ are quite demanding by hand, these calculations were performed in symbolic derivation and integration in Wolfram. 

	Then in order to test the accuracy of the new corrected expectation, the stochastically perturbed model \eqref{perturbed SIS model} with the CIR process is simulated for 1500 sample paths. Then their results are averaged to yield and expectation estimate, when these results are plotted together with $u^{(1)}_0$ and $u^{(1)}_0+ c u^{(1)}_1$ the results obtained are presented in figure \ref{figure average and correction terms}.
	
	\begin{figure}[h]
		\centering
		\begin{subfigure}{0.49\textwidth}
			\centering
			\includegraphics[width=\linewidth]{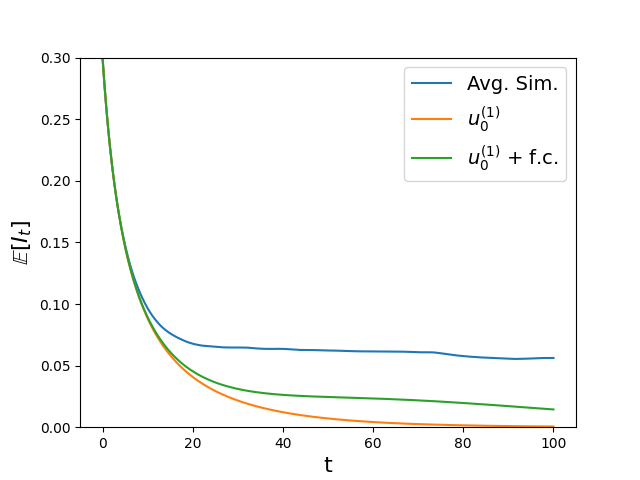}
			\caption{Extinction of infection.}
		\end{subfigure}
		\hfill
		\begin{subfigure}{0.49\textwidth}
			\centering
			\includegraphics[width=\linewidth]{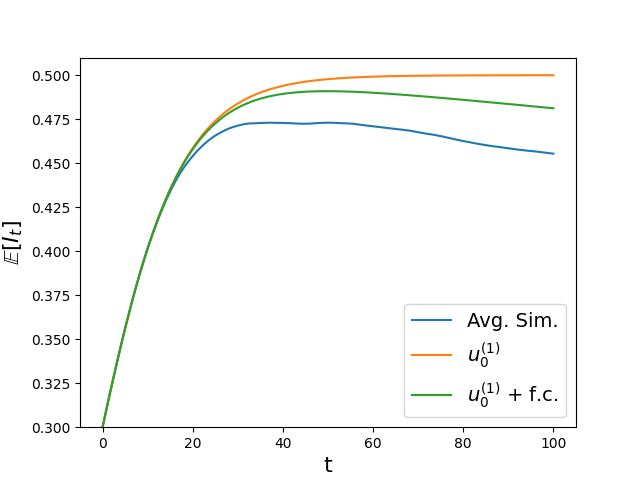}
			\caption{Persistence of infection.}
		\end{subfigure}
		\caption{The plot showing the average of simulation sample paths in blue, deterministic or mean-field solution $u^{(1)}_0(t)$ in orange and $u^{(1)}_0(t)$ plus the first correction term ($u^{(1)}_1(t)$) in green. The parameters are (a): $c=0.1$, $x=0.3$, $y=b=0.45$, $\gamma=0.5$, $\sigma=0.063$, $a=0.02$ and (b): $c=0.1$, $x=0.3$, $y=b=0.2$, $\gamma=0.1$, $\sigma=0.032$, $a=0.02$. The number of averaged simulations is $1500$.}
		\label{figure average and correction terms}
	\end{figure}
		
	Note for the CIR perturbation, the $u^{(1)}_0$ in figure \ref{figure average and correction terms} corresponds to the mean-free approximation \eqref{mean field SIS approximation}, since the expectation of the CIR model with the selected parameters is only the initial condition $y$. One can see that the mean-field method approximates the solution to some extend but the expectation with the first correction term performs better.
	
	\subsection{Results on Variance of the Model \eqref{perturbed SIS model} with the CIR Perturbation}\label{section variance results correction terms}
	One can see that the equation \eqref{feynman kac formula for u(t,x,y)} holds for any well-behaved function $\varphi(x)$. This makes the fkPDE approach very powerful because instead of only focusing on the expectation of the solution, one can actually focus on expectation of any function of the solution. Just to give an example here we will consider $\varphi(x)=x^2$. Using the second moment and the first moment we can approximate the amount of variance introduced to the model via the CIR perturbation \eqref{CIR process eqn}.
	
	In this setup we denote $u^{(2)}:=\mathbb{E}[I_t^2|I_0=x, Y_0=y]$ to emphasize the $\varphi(x)$ being a quadratic function of the initial condition. With this new notation, we are searching for a solution in the form $u^{(2)}(t,x,y)=\sum_{n\geq0}u^{(2)}_n(t,x,y)c^n$.
	
	When the variance of the perturbed solution is expressed with the first correction term as:
	\begin{align}
		\begin{split}
			Var(I_t) =& \mathbb{E}[I_t^2] - \mathbb{E}[I_t]^2,\\
			=&u^{(2)}_0+cu^{(2)}_1-\big(u^{(1)}_0+cu^{(1)}_1\big)^2+O(c^2),\\
			=&c\left( u^{(2)}_1 - 2u^{(1)}_0u^{(1)}_1 \right)+O(c^2).\\
		\end{split}
		\label{eqn variance correction terms}
	\end{align}

Due to $u^{(2)}:=(u^{(1)})^2$ by definition, so the only non-zero contributions will come from the correction terms. This makes sense because when $c=0$, then there is no perturbation, hence no variance should be present. Expression \eqref{eqn variance correction terms} provides how to evaluate the first correction term for the variance introduced to the system. Using the proposition \ref{proposition solving system correction terms}, the term $u^{(2)}_1$ is evaluated analytically in Wolfram using symbolic differentiation and integration.

Then the first variance correction term in \eqref{eqn variance correction terms} is evaluated and the result is plotted in figure \ref{figure var(I_t) correction}. 
\begin{figure}[h]
	\centering
	\includegraphics[width=0.7\textwidth]{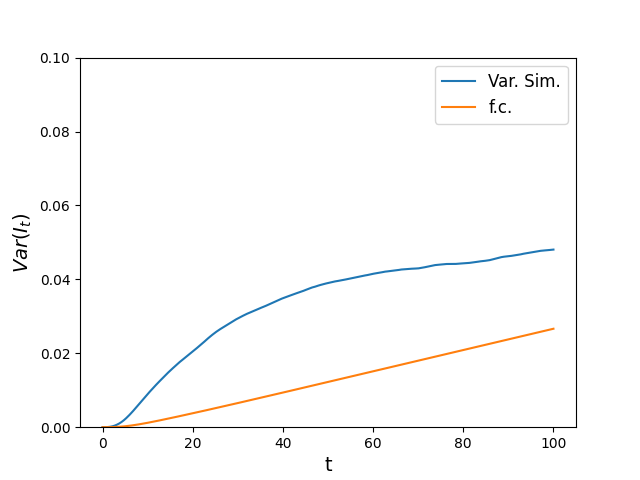}
	\caption{The figure showing the variance of the 1500 simulated sample paths in blue and the first correction term to the variance \eqref{eqn variance correction terms} in orange. The used parameters are: $c=0.1$, $x=0.3$, $y=b=0.5$, $\gamma=0.3$, $\sigma=0.063$, $a=0.02$, hence the infection persists and the perturbation is always positive.}
	\label{figure var(I_t) correction}
\end{figure}

One can see the first correction term for the $Var(I_t)$ was able to capture the increasing trend of variance dynamics of the solution to some extend. Of course with more correction terms considered, a better variance estimate of the model \eqref{perturbed SIS model} would be obtained.

\section{Comparison of Different Perturbation Types}\label{section perturbation comparisons}
Another important question to address in model perturbations is that how does different perturbation choices of the same parameter affect the model? Consider the perturbation performed in \cite{graySISpaper}, where the authors make the disease transmission coefficient perturb with an additive Brownian noise:
$$\beta dt\to\beta dt + \sigma dB_t.$$
and the CIR perturbation performed in this paper as:
$$\beta \to Y_t,\quad Y_0=y,$$
where $Y_t$ solves the SDE \eqref{generic Y_t perturbation} with parameters $y=b=\beta$. 
 
%
 
One can note that the expectation of both perturbations are constant and equal to $\beta$. The natural question arises: Do they have similar behavior for expected solution of the perturbed SIS model \eqref{perturbed SIS model}?

\subsection{The System of fkPDEs of Perturbation Performed on \cite{graySISpaper}}
In this section, the perturbation used in \cite{graySISpaper}  will be reorganized into the fkPDE approach to come up with the $0$'th and first order corrections. When the perturbation of \cite{graySISpaper} is applied on the deterministic SIS model the following SDE is obtained:
\begin{equation}\label{gray perturbed SIS model}
	dI_t = \left( \beta I_t(1-I_t) - \gamma I_t \right)dt + \tilde{\sigma}\sqrt{c} (I_t(1-I_t)) dB_t,\quad I_0=x,
\end{equation}
where the scaled constants satisfy the relations: $\tilde{\sigma}\sqrt{c} = \sigma$. One can define $g(t,x):=\mathbb{E}[\varphi(I_t)|I_0=x]$ and write the associated Feynman-Kac equation as:

\begin{equation}\label{eqn gray feynman-kac}
	\frac{\partial g(t,x)}{\partial t}=\Big( \beta x(1-x)-\gamma x \Big)\frac{\partial g(t,x)}{\partial x}+c\left( \frac{\tilde{\sigma}^2}{2}x(1-x) \right)\frac{\partial^2 g(t,x)}{\partial  x^2},\quad g(0,x)=\varphi(x).
\end{equation}



Suppose the solution to the fkPDE \eqref{eqn gray feynman-kac} is in the form 
$$g(t,x):=\sum_{n\geq0}g_n(t,x)c^n.$$

Similarly this assumption expands the single PDE to a system of PDEs:

\begin{align}\label{eqn system gray correction terms}
	\begin{split}
		\frac{\partial g_0}{\partial t}=\frac{\partial g_0}{\partial x}\Big( \beta &x(1-x) \Big),\\
		\frac{\partial g_1(t,x)}{\partial t}=\frac{\partial g_1}{\partial x}\Big( \beta x(1-x&) \Big)+\frac{\partial^2 g_0}{\partial x^2}\left( \frac{\tilde{\sigma}^2}{2}x(1-x) \right),\\
		\frac{\partial g_2(t,x)}{\partial t}=\frac{\partial g_2}{\partial x}\Big( \beta x(1-x&) \Big)+\frac{\partial^2 g_1}{\partial x^2}\left( \frac{\tilde{\sigma}^2}{2}x(1-x) \right),\\
		&\vdots
	\end{split}
\end{align}

\begin{proposition}\label{proposaition gray correction}
	The solution to the system \eqref{eqn system gray correction terms} is given as:
	
\begin{align*}
	g_n(t,x) &= \frac{\tilde{\sigma}^2}{2}\int_{0}^{t}\left[ \frac{\partial^2g_{n-1}(s, I^D_{t-s}(x))}{\partial x^2}\right]ds,& for \:n\geq1,\\
	g_0(t,x)&=\varphi(I^D_{t}(x)), &for\: n=0.
\end{align*}
%
\end{proposition}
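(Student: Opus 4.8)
The plan is to reproduce, in the simpler $x$-only setting, the argument behind Proposition \ref{proposition solving system correction terms}. The system \eqref{eqn system gray correction terms} is a cascade: each line is a linear, first-order, inhomogeneous transport equation in $x$ whose transport field is fixed and whose source is built from $x$-derivatives of the already-solved previous level. So I would first pin down $g_0$, and then run an induction on $n$ using the method of characteristics (a Duhamel representation) to obtain $g_n$ for $n\geq1$.

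For the base case, the equation for $g_0$ is homogeneous transport with vector field $\alpha(x):=\beta x(1-x)-\gamma x$, i.e. exactly the right-hand side of the deterministic SIS ODE \eqref{deterministic SIS model}. Since the deterministic flow $t\mapsto I^D_t(x)$ solves $\dot I=\alpha(I)$ with $I^D_0(x)=x$, one checks by the chain rule that $g_0(t,x):=\varphi(I^D_t(x))$ satisfies $\partial_t g_0=\varphi'(I^D_t(x))\,\partial_t I^D_t(x)$ while $\alpha(x)\partial_x g_0=\varphi'(I^D_t(x))\,\alpha(x)\,\partial_x I^D_t(x)$, and these agree because $\partial_t I^D_t(x)=\alpha(x)\,\partial_x I^D_t(x)$ (the flow/semigroup identity, or simply the Feynman--Kac formula applied to a \emph{deterministic} process). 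The initial condition $g_0(0,x)=\varphi(x)$ is immediate, which gives the $n=0$ formula.

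For the inductive step, assume $g_{n-1}$ is known and smooth on $[0,T]\times(0,1)$, and set $f_{n-1}(t,x):=\tfrac{\tilde{\sigma}^2}{2}x(1-x)\,\partial_{xx}g_{n-1}(t,x)$, which by \eqref{eqn system gray correction terms} is the source for $g_n$: $\partial_t g_n=\alpha(x)\partial_x g_n+f_{n-1}$, $g_n(0,\cdot)=0$. Introduce characteristic curves $s\mapsto x(s)$ on $[0,t]$ solving $\dot x(s)=-\alpha(x(s))$ with terminal data $x(t)=x$; as in \eqref{method of characteristics path x(t)}--\eqref{x(s) explicit solution} this is the deterministic SIS ODE run backwards, so $x(s)=I^D_{t-s}(x)$ with the explicit formula \eqref{deterministic infected pop soln}. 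Along such a curve $\tfrac{d}{ds}g_n(s,x(s))=\partial_t g_n(s,x(s))-\alpha(x(s))\partial_x g_n(s,x(s))=f_{n-1}(s,x(s))$; integrating from $0$ to $t$ and using $g_n(0,x(0))=0$ and $x(t)=x$ yields $g_n(t,x)=\int_0^t f_{n-1}(s,I^D_{t-s}(x))\,ds$, which is the claimed Duhamel formula. Substituting the definition of $f_{n-1}$ gives the stated expression, and the induction closes; matching powers of $c$ after inserting $g=\sum_n g_n c^n$ into \eqref{eqn gray feynman-kac} confirms that these $g_n$ are indeed the coefficients sought.

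I expect the only real subtlety, rather than the algebra, to be the regularity and well-posedness bookkeeping: one needs $I^D_{t-s}(x)$ to remain in the open interval $(0,1)$ so that all $x$-derivatives of $g_{n-1}$ and the integrals make sense and the characteristic ODE admits a unique global solution on $[0,t]$; this is guaranteed by the boundedness proposition in Section \ref{section SIS pertubration} together with the explicit solution \eqref{deterministic infected pop soln}. A fully rigorous treatment would additionally require bounds on $g_n$ on fixed time intervals in order to legitimise the term-by-term identification of the power series, but within the formal perturbation-theory framework used throughout the paper this convergence issue is taken for granted.
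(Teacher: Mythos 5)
Your proof is correct and follows exactly the route the paper intends: the paper's own proof consists of a single sentence deferring to the method-of-characteristics argument of Proposition \ref{proposition solving system correction terms}, which you reproduce faithfully in the one-variable setting (base case via the deterministic flow, backward characteristic $x(s)=I^D_{t-s}(x)$, Duhamel integration, induction on $n$). One small point: carrying your source term $f_{n-1}(s,x)=\tfrac{\tilde{\sigma}^2}{2}x(1-x)\,\partial_{xx}g_{n-1}(s,x)$ along the characteristic actually yields the integrand $\tfrac{\tilde{\sigma}^2}{2}\,I^D_{t-s}(x)\bigl(1-I^D_{t-s}(x)\bigr)\,\partial_{xx}g_{n-1}\bigl(s,I^D_{t-s}(x)\bigr)$, so your derivation produces the factor $x(1-x)$ evaluated along the flow, which the proposition's displayed formula omits --- the discrepancy is an omission in the statement, not a flaw in your argument.
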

\begin{proof}
	The proof is using the methods as the proof of \ref{proposition solving system correction terms}.
\end{proof}

\begin{remark}\label{remark u0 g0 equal}
	We have the identity $u_0(t,x,y)=g_0(t,x)$ when $y=b=\beta$. This is due to the fact that, both perturbations are applied on the same deterministic model, so the affect of perturbations are only apparent starting from the first order of correction terms.
\end{remark}
\subsection{Comparison of Gray et. al. Perturbation with the Natural Perturbation on the Expectation}
In this section we would like to compare how 2 perturbations having the same expectation may affect the expectation of the infected population. To this aim, the leading correction terms will be used to show the affect of the perturbation on the deterministic model. Choose $\varphi(x)=x$ and using the pipeline suggested in proposition \ref{proposaition gray correction}, the first correction term to the $g(t,x):=\mathbb{E}[I_t|I_0=x]$ is again analytically evaluated in Wolfram. When the results are plotted, the figure \ref{figure gray vs CIR} is obtained.


\begin{figure}[h]
	\centering
	\begin{subfigure}{0.48\textwidth}
		\centering
		\includegraphics[width=\linewidth]{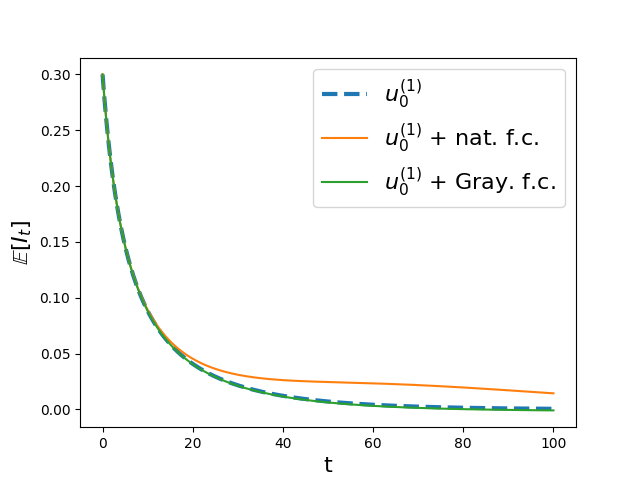}
		\caption{Extinction of the infection.}
	\end{subfigure}
	\hfill
	\begin{subfigure}{0.48\textwidth}
		\centering
		\includegraphics[width=\linewidth]{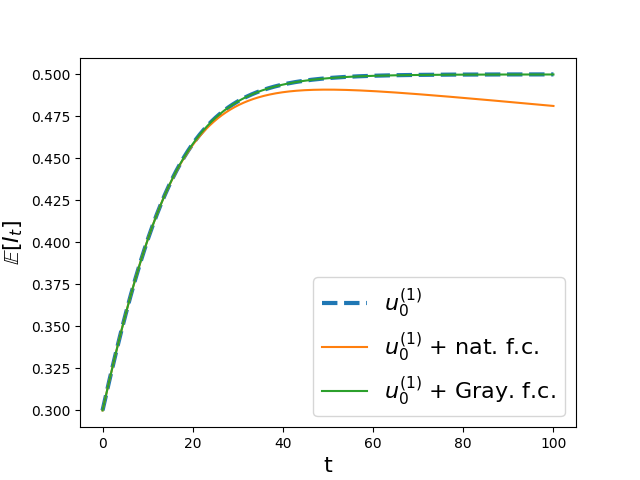}
		\caption{Persistence of the infection}
	\end{subfigure}
	\caption{The plot showing the deterministic solution $u^{(1)}_0(t,x,y)=g_0(t,x)$ plotted in blue dashed lines, the leading correction term for the Gray perturbation \eqref{gray perturbed SIS model} on the expectation in green solid line and leading correction term for the CIR perturbation \eqref{CIR process eqn} in orange solid line. The used parameter setup is the same as the figure \ref{figure average and correction terms}, (a): $c=0.1$, $x=0.3$, $y=b=0.45$, $\gamma=0.5$, $\sigma=0.063$, $a=0.02$ and (b): $c=0.1$, $x=0.3$, $y=b=0.2$, $\gamma=0.1$, $\sigma=0.032$, $a=0.02$.}
	\label{figure gray vs CIR}
\end{figure}

The figure \ref{figure gray vs CIR} highlights an important point for modelers, since when the first order corrections of the corresponding perturbations are considered, the resulting expected dynamics of infected population, deviate from each other. It means using two different perturbations having same expectations actually matter. It seems like the perturbation considered in Gray et. al. \cite{graySISpaper} doesn't affect the deterministic dynamics on average. However when the natural perturbation presented in this paper is considered with the CIR perturbation, one can see that the expectation of the model is more towards the center. 

This different dynamics of the various perturbation choices can inform the modeler and let them use the most suitable one for their needs. For example, a modeler aiming for a more "balanced" perturbation might consider the CIR model whereas another modeler interested in obtaining the deterministic dynamics might consider the perturbation performed in Gray's paper \cite{graySISpaper}.

\section{Discussion}
In this paper a natural perturbation approach is presented to the deterministic SIS model's disease transmission coefficient in section \ref{section SIS pertubration}. This new approach is natural in the sense that, it aims the same constraint on the disease transmission rate being non-negative. Even though the effect of violation of this constraint is hidden in the irregularities of the sample path of the solution $I_t$, still from a modeling point of view, it is an important issue. Moreover in section \ref{section SIS pertubration}, the complete analysis of the naturally perturbed model is presented: Boundedness of the solution and the conditions to have extinction and persistence of the infection. Lastly in section \ref{section diffusion examples} two one dimensional diffusion processes are presented as examples that satisfy the assumption \ref{assumptions}. Then the analysis performed on the natural stochastic SIS model is illustrated with figures for the CIR perturbation.

In section \ref{section on average analysis}, a technique to analytically approximate the expectation of any function of the solution of the stochastic SIS model, $\varphi(I_t)$ is presented. The technique basically relies on the applying an appropriately scaled perturbation on the deterministic SIS model and then converting the SDEs to the associated Feynman-Kac PDE (fkPDE). Lastly by using the scaled perturbation and the classical Perturbation Theory, one can expand the single fkPDE to a system of PDEs. It has been shown for $\varphi(x)=x$ that the first solution term of this system corresponds to the mean-field solution, which is the deterministic infected population. With increasing order of correction terms the approximation better approximates the actual expectation of the solution as presented in figure \ref{figure average and correction terms}. One should note that, the calculations done in this section can be generalized to a wider range of functions than $\varphi(x)=x$. As an example, $\varphi(x)=x^2$ is also considered and the first correction to the variance introduced to perturbed SIS model \eqref{perturbed SIS model} is shown on figure \ref{figure var(I_t) correction}.

Lastly using this technique relying on appropriately scaled fkPDE and Perturbation Theory, two different perturbations are compared in terms of their effect on the expectation of the infected population: The CIR model and \eqref{gray perturbation} from \cite{graySISpaper}, both having the same expectation. It turned out, their effect was different. Such differences are important from a modeling point of view because different perturbation methods with same expectation yield different on average dynamics. Luckily, the technique presented in this paper, modelers can analytically trace the effect of their desired perturbations and compare different models. This approach being able to generalized to other functions $\varphi(x)$ with the provided recursive way for evaluation of higher order corrections terms provide promising results.

\subsection*{Acknowledgements}
I would like to thank Prof. Alberto Lanconelli from University of Bologna, for the invaluable guidance and encouraging attitude.

	\newpage
	\bibliographystyle{plain}
	\bibliography{biblio}
	
\end{document}